\newcommand{\R}{{\mathbb R} }
\newcommand{\cO}{{\mathcal O} }
\newcommand{\cS}{{\mathcal S} }
\newcommand{\wh}{\widehat}
\newcommand{\wt}{\widetilde}
\newcommand{\pt}{\partial}
\def\ol#1{{\overline{#1}}}
\newtheorem*{Maintheorem*}{Main Theorem}
\newtheorem*{theorem*}{Theorem}
\newtheorem{theorem}{Theorem}
\newtheorem{definition}{Definition}
\newtheorem{lemma}{Lemma}
\newtheorem*{remark*}{Remark}
\newtheorem{proposition}{Proposition}
\newtheorem{corollary}{Corollary}
\def\ke{K{\"a}h\-ler-Ein\-stein }
\def\ka{K{\"a}h\-ler }
\def\he{Her\-mite-Ein\-stein }
\def\wp{Weil-Pe\-ters\-son }
\def\ii{\sqrt{-1}}
\def\ddb{\sqrt{-1}\partial\overline{\partial}}
\def\cinf{C^\infty}
\def\psh{plurisubharmonic}
\title{An extension theorem for hermitian line bundles}
\author[G.~Schumacher]{Georg Schumacher}
\address{Fachbereich Mathematik und Informatik,
Philipps-Universit\"at Marburg, Lahnberge, Hans-Meerwein-Straße, D-35032
Marburg, Germany}
\email{schumac@mathematik.uni-marburg.de}
\begin{document}

\begin{abstract}
We prove a general extension theorem for holomorphic line bundles on reduced complex spaces, equipped with singular hermitian metrics, whose curvature currents can be extended as positive, closed currents. The result has applications to various moduli theoretic situations.
\end{abstract}

\maketitle

\section{Introduction}
Our aim is to prove an extension theorem for holomorphic line bundles $L'$ that are defined on the complement $Y'$ of an analytic subset $A\subset Y$ in a reduced,  complex space of pure dimension $n$ say. We do not have to assume that the given space is projective. However, if we do, an essential step is to solve the extension problem for complex analytic hypersurfaces $H'\subset Y'$. For this situation classical results are known: By the Remmert-Stein theorem \cite{re-st} the closure of $H'$ is an analytic subset of $Y$, if the dimension of $A$ is smaller than $n-1$ everywhere, and Shiffman's theorem \cite{sh1} states that it is sufficient to require that the $2n-3$-dimensional Hausdorff measure of $A$ is equal to zero. More general is Bishops theorem \cite{B}, which only requires that for suitable neighborhoods $U$ of points of $A$ the set $H'\cap (U\backslash A)$ has finite volume.

An analytic aspect of the extension problem for complex hypersurfaces $H'$ is given by the induced current of integration $[H']$, which is a closed positive current of degree $(1,1)$. Here we assume that $Y$ is smooth. Now, if an extension of the current of integration $[H']$ to $Y$ as a positive closed current $T$ is known to exist, then Siu's theorem \cite{siu:lel} implies that the locus, where the Lelong numbers of $T$ are positive, yields an extension of the hypersurface $H'$ to $Y$.

In this sense a natural generalization of the extension problem for analytic hypersurfaces concerns positive, closed $(1,1)$-currents on complex manifolds. Various results are known: Shiffman proved in \cite{shi} that any semi-positive closed $(1,1)$-form of class $\cinf$ can be extended as such into an analytic subset of codimension greater or equal to two. Skoda's theorem \cite{Sk} implies that any positive closed $(1,1)$-current $T'$ on $Y'$ can be extended as such, if it is locally of finite mass near points of $A$. We also mention the extension theorems for positive currents by El Mir \cite{em} and Sibony \cite{si}.

The above reasoning suggests considering the curvature current of the given line bundle that is defined on a reduced not necessarily projective space. We denote by $h'$ a singular hermitian metric on $L'$ over the reduced space $Y'$.

Before turning to singular hermitian metrics and curvature currents, we consider the smooth case: For a complex manifold $Y$ containing an analytic subset $A$ of codimension at least two with complement $Y'$, and a holomorphic line bundle $(L',h')$ on $Y'$ with smooth hermitian metric $h'$ of semi-positive curvature Shiffman shows the extension property in \cite{shi}. The question is about how to treat loci $A$ of codimension one and singularities of $Y$. We show the following theorem.

\begin{theorem}\label{th:main}
Let $Y$ be a reduced complex space and $A\subset Y$ a closed analytic subset. Let $(L',h')$ be a holomorphic line bundle on $Y'= Y\backslash A$ equipped with a singular hermitian metric whose curvature current $\omega_{Y'}= 2 \pi c_1(L',h')$ is positive. We assume that there exists a desingularization of $Y$ such that the pull-back of $\omega_{Y'}$ extends as a positive, closed $(1,1)$-current.

Then there exists a modification $\tau: Z\to Y$, which is an isomorphism over $Y'$ such that $(L',h')$ extends to $Z$ as a holomorphic line bundle equipped with a singular hermitian metric, whose curvature form is a positive current.
\end{theorem}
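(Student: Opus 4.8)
The plan is to reduce the extension problem to a smooth model by means of the given desingularization, to analyse the extended curvature current there via Siu's theorem, and then to reconstruct the line bundle together with its metric from the divisorial part of that current. Since away from $A$ everything is to be left untouched, all the work is local around $A$: I would keep $(L',h')$ over $Y'$ and only modify over $A$, so that the resulting $\tau\colon Z\to Y$ is automatically an isomorphism over $Y'$. Concretely, I would first fix a desingularization $\sigma\colon\wh Y\to Y$ for which the hypothesis holds, so that the pull-back $\sigma^*\omega_{Y'}$, defined a priori only over $\wh Y':=\sigma^{-1}(Y')$, extends to a positive closed $(1,1)$-current $T$ on the complex manifold $\wh Y$. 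Over $\wh Y'$ one has the line bundle $\wh L':=\sigma^*L'$ with singular metric $\wh h':=\sigma^*h'$ and $2\pi c_1(\wh L',\wh h')=T|_{\wh Y'}$.

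Next I would apply Siu's theorem \cite{siu:lel} to $T$: its Lelong upper-level sets are analytic subsets, and the codimension-one components $D_j$ of the positive-Lelong-number locus, with generic Lelong numbers $\lambda_j>0$, produce the Siu decomposition $T=\sum_j\lambda_j[D_j]+R$, where $R\ge 0$ is closed with no divisorial Lelong locus. The divisors $D_j$ that meet $\wh Y'$ are precisely the polar divisors already present in the given singular metric $h'$, and are carried along unchanged; the genuinely new contribution is $\sum_{D_j\subset E}\lambda_j[D_j]$, supported on the exceptional set $E:=\sigma^{-1}(A)$.

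The reconstruction then proceeds in two steps. Because $\wh Y$ is smooth and $E$ is analytic, $\wh L'$ extends to a holomorphic line bundle on $\wh Y$, uniquely across the components of $E$ of codimension $\ge 2$ and, across the divisorial components, up to twisting by the $\mathcal O(D_j)$ with $D_j\subset E$. To extend the metric I would use that $T$ admits local \psh\ potentials which over $\wh Y'$ are local weights of $\wh h'$; after replacing the exceptional coefficients $\lambda_j$ by the nearest integers $\ge\lambda_j$ (which only increases positivity and leaves $T$ unchanged over $\wh Y'$), the differences of these weights along the exceptional divisors correspond to holomorphic transition functions, so the weights glue to a singular metric on a genuine line bundle with curvature a positive current. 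Here Skoda's theorem \cite{Sk} controls the mass of the residual current $R$ near $E$, so that its non-divisorial part extends across the codimension $\ge 2$ strata. Finally I would realise this data on a modification $\tau\colon Z\to Y$: all blow-ups needed to turn the exceptional divisorial part into an honest divisor and to perform the twisting are supported over $A$, hence descend to a modification of $Y$ that is untouched, and thus an isomorphism, over $Y'$.

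The main obstacle I anticipate is the codimension-one part of $A$, that is, the divisorial part of the extended current. A positive closed current is the curvature of a \emph{genuine} holomorphic line bundle only when its divisorial Lelong coefficients along the exceptional locus are integral, whereas Siu's decomposition a priori produces arbitrary positive reals. Overcoming this — by adjusting the exceptional coefficients to integers while preserving positivity and the restriction of the current to $Y'$, and by passing to the modification $Z$ so that the resulting weights define holomorphic transition functions — is the crux of the argument, and it is exactly here that the interplay between Siu's analyticity of Lelong loci and the freedom to alter the current only along $A$ becomes essential.
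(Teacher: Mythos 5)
There are two genuine gaps, each at a place where the paper has to do real work.

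First, your reconstruction step across the divisorial components is not justified and, as stated, cannot work. You assert that $\wh L'$ extends across the divisors in $E$ ``up to twisting by the $\cO(D_j)$'' and that, after rounding the Lelong coefficients $\lambda_j$ of the Siu decomposition up to integers, ``the differences of these weights correspond to holomorphic transition functions.'' But a holomorphic line bundle on the complement of a divisor in a manifold does not automatically extend (the paper points out that line bundles on $\Delta^*\times\Delta^*$ need not be trivial), and the obstruction is not measured by the Lelong numbers of the curvature current: two bundles with the same curvature differ by a flat bundle, so you cannot reconstruct $L$ from the divisorial part of $T$ and still guarantee that it restricts to the \emph{given} $L'$ over $Y'$. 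The paper's actual mechanism is a monodromy argument on the metric itself: locally $\log(e^{\psi_i^0}h_i')$ is pluriharmonic on $U_i\setminus D$, hence equals $-\beta_i\log|z_i|+f_i'+\ol{f_i'}$ for a real period $\beta_i$; writing $\beta_i=\gamma_i+2k_i$ with $0\le\gamma_i<2$, the integer part is absorbed into a change of local trivialization by $z_i^{-k_i}e^{f_i'}$, and positivity forces the new transition functions to be in $L^2_{\mathrm{loc}}$, hence to extend, and forces $\gamma_i=\gamma_j$ on overlaps. The quantity that must be normalized is this period $\beta_i$ of the local pluriharmonic discrepancy, not the Lelong number $\lambda_j$ of $T$, and the curvature changes by $\pi\sum\gamma_\nu[D_\nu]$ rather than by an integral rounding of the Siu coefficients. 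Your appeal to Skoda for the residual current $R$ is also misplaced: $R$ is already defined on all of $\wh Y$ since $T$ is, so there is nothing left to extend there.

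Second, your final descent is asserted but not available: a desingularization $\sigma\colon\wh Y\to Y$ modifies the whole singular locus of $Y$, which in general meets $Y'$, so no composition of $\sigma$ with blow-ups over $A$ is an isomorphism over $Y'$. You must genuinely descend the extended bundle from the smooth model back to (a modification of) $Y$ itself. This is where the paper spends most of its effort in the normal case: it forms $\cS=\mu_*L$, applies Hironaka's flattening theorem to obtain an invertible sheaf $\wh L=\tau^*\cS/\textit{torsion}$ on a modification $W\to Y$ with centers over $A$, compares $\kappa^*\wh L$ with $L$ via an effective divisor $D$ with $\kappa^*\wh L=L(-D)$, and when $D\neq 0$ runs a second flattening for $\mu_*(L(D))$ to reach the final $Z$; the metric descends because the bundle is trivial on the fibers and the weights are \psh. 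None of this, nor the reduction of the non-normal case to the normal one via the gap-sheaf construction of Theorem~\ref{th:nonnormal}, appears in your outline, and without it the required statement that $\tau$ is an isomorphism over $Y'$ fails.
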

Even though the hermitian metric $h'$ is being extended, in general the curvature current of the extension is different from the given extension of the curvature current of $h'$.

The above extension theorem applies to moduli theoretic situations, where certain determinant line bundles exist, whose curvature forms are natural \ka forms on the respective moduli spaces. Typical examples are moduli spaces of canonically polarized manifolds (including Riemann surfaces of genus greater than one) and moduli spaces of stable vector bundles on projective varieties. The \ke and \he metrics resp.\ on the fibers induce distinguished \ka metrics on the moduli spaces, namely the \wp metrics and $L^2$ metrics resp. These distinguished \ka forms are known to be equal to the curvature forms of the Quillen metrics on certain determinant line bundles up to a numerical constant  (cf.\ \cite{f-s,sch:inv12}, \cite{vortex,orbi}). Also for classical Hurwitz spaces this construction was carried out \cite{abs}.

On the other hand, compactifications of moduli spaces in the category of complex spaces were known to exist in the category of algebraic spaces (cf.\ \cite{mfk}), and a natural question is to extend these positive holomorphic line bundles to suitable compactifications. In the above mentioned cases we showed explicitly that the \wp\ forms could be extended as positive currents. Our aim is to give a detailed proof of the extension theorem (cf.\ also \cite{sch:inv12}).

We begin with some comments on Theorem~\ref{th:main}. By definition positive currents on reduced complex spaces are currents on the normalization. (If the given metric is of class $\cinf$, positivity of the curvature current corresponds to semi-positivity of the curvature form by definition as usual.) In this sense we will prove the theorem first under the extra hypothesis that $Y$ is normal. If $Y$ is just a reduced complex space, we  pull back the line bundle to the normalization, and extend it.  It is possible to descend the line bundle to a proper modification space over $Z$ (given by a finite map) that is biholomorphic over $Y'$ by Theorem~\ref{th:nonnormal} below.

The first essential step is to treat the smooth divisorial part of $A$ as far as it is contained in the regular locus of $Y$ and use a monodromy argument (cf.\ Proposition~\ref{pr:smoothextgen}). Both, the holomorphic line bundle and the singular hermitian metric are extended into this locus.

Closed, positive $(1,1)$-currents on normal complex spaces in general do not possess $\pt\ol\pt$-potentials (unless these are already given as curvature currents of singular hermitian metrics). Also, potentials are required, in order to use a pull-back of such a current under a holomorphic map. A desingularization of $Y$ is necessary in order to apply the above mentioned methods, and the extension of the curvature current to a desingularization is the weakest possible assumption and most easy to check, hence suitable for moduli theoretic applications.

Descending both the holomorphic line bundle and the singular hermitian metric from a desingularization poses an extra difficulty: The push-forward of the curvature current need not possess a $\pt\ol\pt$-potential and the direct image of the invertible sheaf may only be invertible after applying Hironaka's flattening theorem. This is taken into account in the proof of the main theorem.

\section{Main theorem for complex manifolds}
Grauert's Oka principle states that on a Stein space any topologically trivial holomorphic vector bundle is holomorphically trivial. This fact contains the statement that on non-compact Riemann surfaces all holomorphic line bundles are trivial. We will need the following classical fact.
\begin{lemma}\label{le:deltastar}
All holomorphic line bundles on $\Delta^*\times \Delta^k$ are trivial, where $\Delta$ and $\Delta^*$ resp. denote the unit disk and the punctured unit disk resp.
\end{lemma}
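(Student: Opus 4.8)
The plan is to reduce the statement to a cohomological computation via the exponential sheaf sequence. Writing $X = \Delta^*\times\Delta^k$, holomorphic line bundles on $X$ are classified by $H^1(X,\cO_X^*)$, and the exponential sequence $0\to\mathbb{Z}\to\cO_X\to\cO_X^*\to 0$ yields the exact sequence
$$ H^1(X,\cO_X)\longrightarrow H^1(X,\cO_X^*)\longrightarrow H^2(X,\mathbb{Z}). $$
It therefore suffices to show that both outer terms vanish.

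First I would record that $X$ is Stein: it is the product of the Stein manifolds $\Delta^*$ and $\Delta^k$, each being a domain of holomorphy (in $\C$, resp. $\C^k$), and a product of Stein spaces is again Stein. Consequently Cartan's Theorem~B applies to the coherent sheaf $\cO_X$ and gives $H^q(X,\cO_X)=0$ for all $q\ge 1$; in particular the left-hand term $H^1(X,\cO_X)$ vanishes. This is, of course, exactly the analytic input underlying Grauert's Oka principle quoted above, and one may equally phrase the argument by saying that on the Stein space $X$ the holomorphic and the topological classifications of line bundles coincide.

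For the right-hand term I would use that $X$ is homotopy equivalent to the circle $S^1$: the factor $\Delta^*$ deformation retracts onto $S^1$ while the polydisk $\Delta^k$ is contractible, so $X\simeq S^1$. Hence $H^2(X,\mathbb{Z})\cong H^2(S^1,\mathbb{Z})=0$. Combining the two vanishings, the exact sequence forces $H^1(X,\cO_X^*)=0$, which is precisely the assertion that every holomorphic line bundle on $X$ is trivial.

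Since this is a classical fact, no step is a genuine obstacle; the only points requiring a word of care are the verification that the product $\Delta^*\times\Delta^k$ is Stein (so that Theorem~B, equivalently the Oka principle, is available) and the identification of the homotopy type of $X$ with $S^1$. Both are standard, and I would state them explicitly rather than dwell on them, so that the lemma can be invoked freely in the monodromy argument of Proposition~\ref{pr:smoothextgen}.
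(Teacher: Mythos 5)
Your argument is correct and is essentially the same as the paper's: the lemma is stated there without a written proof, as an immediate consequence of Grauert's Oka principle on the Stein manifold $\Delta^*\times\Delta^k$, which is precisely the combination of $H^1(X,\cO_X)=0$ (Cartan's Theorem B) and $H^2(X,\mathbb{Z})=0$ (homotopy equivalence with $S^1$) that you spell out via the exponential sequence.
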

We mention that line bundles on the product of two punctured disks are not necessarily trivial, and Lemma~\ref{le:deltastar} only provides {\em local} extensions of  holomorphic line bundles from $\Delta^*\times \Delta^*$ to $\Delta^2 \backslash \{(0,0)\}$. (Once a line bundle is extended to $\Delta^2 \backslash \{(0,0)\}$ as a trivial line bundle, it can obviously be extended to $\Delta^2$.)

We will consider holomorphic line bundles $L$ equipped with a possibly singular hermitian metric $h$ on a normal complex space. We will call $\ii \Theta(L,h)= 2\pi c_1(L,h)$ the (real) curvature current. Since the curvature current possesses locally a $\pt\ol\pt$-potential that is (locally) integrable, pull-backs of the curvature current under holomorphic maps are well-defined.

Our result will be proven under the further assumption that $X$ is {\em normal}. The general case will be treated in the last section.

Note that on normal complex spaces we use the notion of positivity for $(1,1)$-currents that possess local $\pt\ol\pt$-potentials.

The core of the Theorem is the following version for manifolds and smooth hypersurfaces. We will identify holomorphic line bundles and invertible sheaves and use the same letter for both.

\begin{proposition}\label{pr:smoothext}
  Let $Y$ be a complex manifold and let $D\subset Y$ be a closed analytic smooth hypersurface. Let $(L',h')$ be a holomorphic line bundle on $Y'=Y\backslash D$ with a singular hermitian metric, whose curvature current is positive and extends to $Y$ as a positive closed current $\omega$. Then $(L',h')$ can be extended to $Y$ as a holomorphic line bundle $(L,h)$ equipped with a singular hermitian metric, whose curvature current is positive. The latter current differs from $\omega$ by the sum over currents of integration over the components of $D$ with non-negative coefficients smaller than $2\pi$.
\end{proposition}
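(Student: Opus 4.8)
The plan is to work locally around $D$ and reduce the construction to the analysis of harmonic functions on a punctured disc. Cover $D$ by coordinate polydiscs $U_i\cong \Delta\times\Delta^{n-1}$ in which $D=\{z_1=0\}$, so that $U_i\cap Y'\cong\Delta^*\times\Delta^{n-1}$. By Lemma~\ref{le:deltastar} the restriction $L'|_{U_i\cap Y'}$ is trivial; I fix a trivializing section $e_i'$ and write $\varphi_i'=-\log|e_i'|^2_{h'}$ for the corresponding local weight, a plurisubharmonic function with $\ddb\varphi_i'=\omega|_{U_i\cap Y'}$. Since $\omega$ is a positive closed $(1,1)$-current on the manifold $U_i$, it admits a plurisubharmonic potential $\varphi_i$ with $\ddb\varphi_i=\omega$ on all of $U_i$. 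The comparison function $w_i:=\varphi_i'-\varphi_i$ is then pluriharmonic on $U_i\cap Y'$, and the whole problem is encoded in its behaviour along $D$.

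The next step is to read off from $w_i$ the obstruction to extension. Restricting to a transverse disc shows that $w_i=\beta_i\log|z_1|+u_i$, where $\beta_i\in\R$ is the coefficient of the (single-valued) logarithmic term and $u_i$ is pluriharmonic with no $\log|z_1|$ part; pluriharmonicity of $w_i$ forces $\beta_i$ to be constant. Writing $u_i=\mathrm{Re}(F_i)$ with $F_i$ holomorphic on $U_i\cap Y'$ (its harmonic conjugate is single-valued precisely because the logarithmic term has been split off, although $F_i$ may be essentially singular along $D$), I change the trivialization by the nowhere-vanishing holomorphic gauge $z_1^{m_i}e^{F_i/2}\in\cO^*(U_i\cap Y')$, setting $e_i:=z_1^{m_i}e^{F_i/2}\,e_i'$. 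The new weight becomes $\tilde\varphi_i=\varphi_i+(\beta_i-2m_i)\log|z_1|$, in which $u_i$ has been absorbed entirely. Here is the point where the monodromy enters: the gauge $z_1^{\beta_i/2}$ that would remove the logarithm altogether is multivalued, with monodromy $e^{\pi\ii\beta_i}$ around $D$, so only integral winding can be cancelled. Choosing $m_i=\lfloor\beta_i/2\rfloor$ leaves $\beta_i-2m_i\in[0,2)$, hence $\tilde\varphi_i$ is plurisubharmonic, extends across $D$, and $\ddb\tilde\varphi_i=\omega+(\beta_i-2m_i)\pi\,[D]$ with residual coefficient in $[0,2\pi)$.

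It remains to glue the modified frames into a global line bundle $L$ on $Y$ with a singular metric $h$ restricting to $h'$ on $Y'$. The transition functions in the new frames are $\tilde g_{ij}=z_1^{\,m_i-m_j}e^{(F_i-F_j)/2}g_{ij}$, where $g_{ij}\in\cO^*(U_i\cap U_j\cap Y')$ are the transitions of $L'$ in the frames $e_i'$. The key computation is that, because $\varphi_i-\varphi_j$ is pluriharmonic and bounded across $D$ while $\varphi_i'-\varphi_j'=-\log|g_{ij}|^2$, the logarithmic coefficients satisfy $\beta_i-\beta_j=-2k_{ij}$, where $k_{ij}$ is the order of $g_{ij}$ along $D$, so that $m_i-m_j=-k_{ij}$; and the essentially singular parts of $e^{(F_i-F_j)/2}$ and of $g_{ij}$ cancel exactly in $\tilde g_{ij}$. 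Consequently $\log|\tilde g_{ij}|$ is bounded near $D$, so $\tilde g_{ij}$ extends to a nowhere-vanishing holomorphic function across $D$, and the cocycle relation persists by continuity. These extended transitions define $L$ on a neighbourhood of $D$, and the weights $\tilde\varphi_i$ define $h$; gluing with $(L',h')$ over $Y'$ produces the desired extension. Since $\beta_i\bmod 2$, and hence each residual coefficient $c_j:=(\beta_i-2m_i)\pi$, is independent of the chart and of the chosen trivialization, it is constant on each connected component $D_j$ of $D$, and the curvature current of $(L,h)$ equals $\omega+\sum_j c_j\,[D_j]$ with $0\le c_j<2\pi$, as claimed.

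I expect the main obstacle to be the behaviour of the metric weight $\varphi_i'$ across $D$: a priori it need be neither bounded above nor even locally integrable there, so $w_i$ may carry genuine essential singularities. The crux of the argument is therefore the verification that these singular contributions are invisible to the extension — they are fully absorbed by the holomorphic gauge $e^{F_i/2}$ on each chart and cancel in the cocycle $\tilde g_{ij}$ — leaving only the $\log|z_1|$–coefficient modulo $2$ as a genuine, monodromy-type invariant, which is exactly what produces the fractional current of integration along $D$.
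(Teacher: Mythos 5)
Your proposal is correct and follows essentially the same route as the paper: local triviality of $L'$ on $\Delta^*\times\Delta^{n-1}$ via Lemma~\ref{le:deltastar}, a local potential for $\omega$, the monodromy decomposition of the pluriharmonic difference of weights into $\beta_i\log|z_1|$ plus the real part of a holomorphic function, a gauge change by $z_1^{m_i}e^{F_i/2}$ absorbing the essential singularity and the integral part of $\beta_i$, and extension of the modified cocycle. The one genuinely different sub-step is how you show the residual coefficients agree on overlaps: the paper extends $\wt g'_{ij}$ across $D$ by an $L^2$ argument (using $-2<\gamma_i-\gamma_j<2$) and then rules out a fractional-power modulus, whereas you deduce $\beta_i-\beta_j\in 2\mathbb{Z}$ directly from the integrality of the winding number of $g_{ij}$ around $D$ --- a slightly more elementary and arguably cleaner argument; just note that ``order of $g_{ij}$ along $D$'' should be read as this winding number, since $g_{ij}$ may have an essential singularity and has no order of vanishing in the literal sense.
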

  Observe that we do not assume that $D$ is connected.

We state a well-known fact.

\begin{lemma}\label{le:iso}
  Let $Y$ a complex manifold and $L_1$ and $L_2$ holomorphic line bundles on $Y$, equipped with singular hermitian metrics $h_1$ and $h_2$. Let $A\subset Y$ be a closed analytic subset, whose codimension is at least $2$ in any point. Assume that there exists an isomorphism $\alpha: (L_1,h_1)|Y\backslash A \to (L_2,h_2)|Y\backslash A$.
  Then $\alpha$ can be extended to $Y$ in a unique way.
\end{lemma}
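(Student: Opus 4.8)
The plan is to reinterpret the hermitian isomorphism $\alpha$ as a single global section of one line bundle and then to extend it across $A$ by a Riemann-type continuation theorem, exploiting the codimension hypothesis. First I would observe that giving a holomorphic isomorphism between $L_1|_{Y\backslash A}$ and $L_2|_{Y\backslash A}$ is the same as giving a nowhere-vanishing holomorphic section $s$ of the line bundle $M:=L_1^{-1}\otimes L_2$ over $Y':=Y\backslash A$; note that $M$ is already defined on all of $Y$, and that the inverse isomorphism $\alpha^{-1}$ corresponds to the nowhere-vanishing section $s^{-1}$ of $M^{-1}$ over $Y'$. Since $Y$ is a manifold and $A$ is analytic of codimension at least two, in any local trivialization of $M$ the section $s$ is just a holomorphic function on the complement of a codimension $\geq 2$ analytic set, so by the second Riemann extension theorem it extends holomorphically; gluing these local extensions yields $\tilde s\in H^0(Y,M)$. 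Applying the same argument to $s^{-1}$ produces $\widetilde{s^{-1}}\in H^0(Y,M^{-1})$.

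Next I would upgrade $\tilde s$ from a morphism to an isomorphism, which is the one genuinely non-formal point in the argument. The product $\tilde s\cdot\widetilde{s^{-1}}$ is a holomorphic function on $Y$ that equals $1$ on the dense open set $Y'$, and since holomorphic functions are continuous while $A$ is nowhere dense, it is identically $1$. Hence $\tilde s$ is invertible at every point and defines an isomorphism $\tilde\alpha\colon L_1\to L_2$ on all of $Y$ restricting to $\alpha$ on $Y'$. (Equivalently, one may argue that the zero locus of $\tilde s$, if nonempty, is of pure codimension one and contained in $A$, contradicting $\mathrm{codim}\,A\geq 2$.) Uniqueness is then immediate: any two holomorphic extensions of $\alpha$ agree on the dense set $Y'$, hence everywhere by continuity.

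It remains to check that $\tilde\alpha$ respects the metrics. In local frames $e_i$ of $L_i$ write $|e_i|^2_{h_i}=e^{-\varphi_i}$ with weights $\varphi_i\in L^1_{\mathrm{loc}}$, and write $\tilde s=g$ for a nowhere-vanishing holomorphic function $g$; the condition that $\alpha$ be an isometry is then the pointwise identity $\varphi_1=\varphi_2-\log|g|^2$, valid on $Y'$. Because $A$ has measure zero, the weights $\varphi_1,\varphi_2$ are locally integrable, and $\log|g|^2$ is pluriharmonic (in particular continuous), this identity persists on all of $Y$ as an equality in $L^1_{\mathrm{loc}}$, equivalently as an equality of currents. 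Thus $\tilde\alpha$ is an isometry for the singular hermitian metrics, completing the extension. The main obstacle is the nowhere-vanishing in the second step; the extension and the metric compatibility reduce to the second Riemann extension theorem and a null-set argument, respectively.
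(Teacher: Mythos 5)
Your proposal is correct and follows essentially the same route as the paper's (much terser) proof: interpret $\alpha$ as a nowhere-vanishing holomorphic section of $L_1^{-1}\otimes L_2$ over $Y\backslash A$, extend it across the codimension-$\geq 2$ set $A$ by the second Riemann extension theorem, note that the extension cannot vanish, and check the isometry condition on the local weights $\varphi_j$ using that $A$ is a null set. Your explicit argument for nowhere-vanishing (extending $s^{-1}$ as well and using $\tilde s\cdot\widetilde{s^{-1}}\equiv 1$) is a clean way to justify the step the paper merely asserts.
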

\begin{proof}
  The bundle $L_1 \otimes L^{-1}_2$ possesses a nowhere vanishing section over $Y\backslash A$ that can be extended holomorphically to $Y$ with no zeroes. Locally with respect to $Y$ the hermitian metrics $h_j$ are of the form $e^{-\varphi_j}$ for locally integrable functions $\varphi_j$, $j=1,2$, whose difference must be pluriharmonic.
\end{proof}
\begin{corollary}\label{co:snc}
  The statement of  Proposition~\ref{pr:smoothext} holds, if $D$ is a simple normal crossings divisor.
\end{corollary}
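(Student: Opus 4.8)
The plan is to reduce the statement to the smooth case already established in Proposition~\ref{pr:smoothext} by first deleting the singular locus of $D$, which has codimension at least two, and then extending the bundle and the metric back across it. Write $D = D_1 + \dots + D_r$ with smooth components meeting transversally and set $A := \mathrm{Sing}(D) = \bigcup_{i<j}(D_i \cap D_j)$, a closed analytic subset of $Y$ of codimension at least two. On $\tilde Y := Y \setminus A$ the divisor $\tilde D := D \cap \tilde Y$ is a smooth, in general disconnected, closed hypersurface, and $\tilde Y \setminus \tilde D = Y'$. Since $\omega$ is defined on all of $Y$, its restriction to $\tilde Y$ extends the curvature current of $(L',h')$, so Proposition~\ref{pr:smoothext} applies on $\tilde Y$ and yields an extension $(\tilde L, \tilde h)$ of $(L',h')$ to $\tilde Y$ whose curvature current equals $\omega|_{\tilde Y} + \sum_i c_i\,[D_i \cap \tilde Y]$ for suitable coefficients $c_i \in [0, 2\pi)$.

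It then remains to cross $A$. Because $Y$ is smooth and $A$ has codimension at least two, the invertible sheaf $\tilde L$ extends uniquely to an invertible sheaf $L := j_*\tilde L$, where $j : \tilde Y \hookrightarrow Y$; this is the global form of the local fact recalled after Lemma~\ref{le:deltastar}, that a line bundle on $\Delta^2 \setminus \{0\}$ extends. For the metric I would argue locally: fix a ball $U$ around a point of $A$ together with a frame $e$ of $L|_U$, and let $\varphi := -\log\tilde h(e,e)$ be the corresponding weight, plurisubharmonic on $U \setminus A$ because the curvature is positive. The positive closed current $\Theta := \omega + \sum_i c_i\,[D_i]$ is defined on all of $U$ and admits a plurisubharmonic potential $\Psi$ with $\ddb\Psi = \Theta$; hence $g := \varphi - \Psi$ is pluriharmonic on $U \setminus A$. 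As $A$ has codimension at least two, $H^1(U \setminus A;\R) = 0$, so $g = \mathrm{Re}\,F$ for a single-valued holomorphic $F$ on $U \setminus A$, and $F$ extends holomorphically across $A$ by Hartogs. Thus $g$, and therefore $\varphi = \Psi + g$, extends to a plurisubharmonic function on $U$. These local extensions agree on overlaps and glue, by Lemma~\ref{le:iso}, to a singular hermitian metric $h$ on $L$ over $Y$.

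By construction the curvature current of $(L,h)$ is $\ddb(\Psi + g) = \Theta = \omega + \sum_i c_i\,[D_i]$ on each such $U$, hence on all of $Y$; it is positive, and it differs from $\omega$ by the integration currents over the components $D_i$ with the same coefficients $c_i \in [0,2\pi)$ furnished by Proposition~\ref{pr:smoothext}. This is exactly the assertion of Proposition~\ref{pr:smoothext} for the simple normal crossings divisor $D$. The main obstacle is the passage across the higher-codimensional singular locus $A$: the line bundle extends for purely sheaf-theoretic reasons, but one must ensure that the plurisubharmonic weight extends as well, which is why I split it into the potential $\Psi$ of $\Theta$ and a pluriharmonic remainder extendable by Hartogs, and one must check that this process neither destroys positivity nor enlarges the coefficients beyond the bound $2\pi$.
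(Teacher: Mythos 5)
Your first step is sound: on $\tilde Y=Y\setminus\mathrm{Sing}(D)$ the divisor is a smooth (disconnected) closed hypersurface, Proposition~\ref{pr:smoothext} applies, and since each $D_i$ is irreducible, $D_i\setminus\mathrm{Sing}(D)$ is connected, so the coefficients are indeed constant along each component. The gap is in the second step, crossing $A=\mathrm{Sing}(D)$: the claim that $\tilde L$ extends across a codimension-two analytic set ``for purely sheaf-theoretic reasons'' is false. One has $\mathrm{Pic}\bigl(\Delta^2\setminus\{0\}\bigr)\cong H^1\bigl(\Delta^2\setminus\{0\},\cO\bigr)\neq 0$, so there are line bundles on the complement of a point in $\Delta^2$ that do not extend; the remark following Lemma~\ref{le:deltastar} is precisely a caution about this (only a bundle already known to be \emph{trivial} on $\Delta^2\setminus\{0\}$ extends ``obviously''). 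In particular $j_*\tilde L$ is coherent of rank one but need not be invertible along $A$. Your metric argument cannot repair this, because it presupposes a frame $e$ of the extension $L|_U$ — i.e., it assumes the bundle has already been extended. What actually forces the extension is the positivity of the curvature; this is exactly Proposition~\ref{pr:smoothextcodim2}, whose proof uses the triviality of the monodromy (the vanishing of the numbers $\beta_i$) to trivialize $L'$ near $A$ and then extends the transition functions. Your pluriharmonic-plus-Hartogs computation is essentially that argument, but it must be applied to a local frame of $\tilde L$ on $U\setminus A$ in order to \emph{produce} the extension of the bundle, not to a frame of an assumed extension.

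The paper's own proof avoids the codimension-two crossing entirely: it removes the components one at a time, setting $Y_\ell=Y\setminus\bigcup_{j\leq\ell}D_j$, and extends from $Y_\ell$ to $Y_{\ell-1}$ by Proposition~\ref{pr:smoothext} applied to the smooth closed hypersurface $D_\ell\cap Y_{\ell-1}$; at each stage the current is only modified by $\gamma_\ell[D_\ell]$ restricted to the current open set, which visibly extends to $Y$. Thus every extension step is across a smooth hypersurface and no extension across higher-codimension sets is ever needed. Your decomposition (first $Y\setminus\mathrm{Sing}(D)$, then $\mathrm{Sing}(D)$) can be made to work, but only by invoking Proposition~\ref{pr:smoothextcodim2} or reproducing its proof for the second step — and note that this would also invert the paper's logical order, since the proof of Proposition~\ref{pr:smoothextcodim2} in turn refers back to the argument of Corollary~\ref{co:snc}.
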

\begin{proof}[Proof of Corollary~\ref{co:snc}]
   Let $D=\sum^k_{j=1}D_j$ be the decomposition, where the $D_j$ are the smooth irreducible components of $D$.  Let $Y_\ell= Y \backslash\bigcup^\ell_{j=1}D_j$. We apply Proposition~\ref{pr:smoothext} to $Y_k\subset Y_{k-1}$ and extend $(L',h')$ to $Y_{k-1}$. In the process the current $\omega|Y_{k-1}$ is being replaced by $(\omega+ \gamma _k [D_k])|Y_{k-1}$, $0\leq \gamma_k <2$, which obviously extends to $Y_0=Y$. By induction over $\ell$ we arrive at $\ell=0$, which comprises the claim.
\end{proof}

\begin{proof}[Proof of Proposition~\ref{pr:smoothext}]
  Let $\{U_j \}$ be an open covering of $Y$ such that the set $D\cap U_i$ consists of the zeroes of a holomorphic function $z_i$ on $U_i$. Since all holomorphic line bundles on the product of a polydisk and a punctured disk are trivial by Lemma~\ref{le:deltastar}, we can chose the sets $\{U_i\}$ such that the line bundle $L'$ extends to such $U_j$ as a holomorphic line bundle. So $L'$ possesses nowhere vanishing sections over $U'_j=U_j\backslash D$. Hence $L'$ is given by a cocycle $g_{ij}' \in \cO_Y^*(U_{ij}')$, where $U_{ij}=U_i\cap U_j$ and $U_{ij}'=U_{ij}\cap Y'$. If necessary, we will replace $\{U_i\}$ by a finer covering.

  We fix the notation first. All quantities that carry an apostrophe exist on the complement of the set $A$. Let $e'_i$ be generating sections of $L'$ over $U'_i$ so that for any section $\sigma'$ of $L'$ the equation $\sigma'|U_i= \sigma'_i e'_i$ holds. With transition functions $g'_{ij}$ over $U'_{ij}$ we have
  $$
  e'_j= e'_i g'_{ij} \text{, and } \sigma'_i= g'_{ij} \sigma'_j.
  $$
  The hermitian metric $h'$ is defined on $U'_i$ by the norms of the generators
  $$
  \|e'_i\|^2_{h'} = h'_i \text{ so that } \left(\|\sigma'\|^2_{h'}\right)|U'_i = |\sigma'_i|^2 h'_i.
  $$
  Now over $U'_{ij}$
  $$
  |g'_{ij}|^2 = h'_j/h'_i
  $$
  holds. Later we will change the generating sections of $L'|U'_i$ to $\wt e'_i$, and use the analogous relations.

  The following monodromy argument is essential for the extension of $L'$.

  \par\noindent
  \textbf{Claim.} \textit{ There exist \psh\ functions $\psi_i$ on the open subsets  $U_i \subset Y$ and holomorphic functions $\varphi'_i$ on $U'_i$ such that
  \begin{equation}\label{eq:varphi}
    h'_i\cdot|\varphi'_i|^{-2} = e^{-\psi_i}|U'_i,
  \end{equation}
  where the $\psi_i-\psi_j$ are pluriharmonic functions on $U_{ij}$.}

  \par\noindent
  {\em Proof of the claim. }
  Let
  $$
  \omega|U_i= \ddb (\psi^0_i)
  $$
  for some \psh\ functions $\psi^0_i$ on $U_i$. Now
  $$
  \log(e^{\psi^0_i} h'_i)
  $$
  is pluriharmonic on $U'_i$. For a suitable number $\beta_i\in \R$ and some holomorphic function $f'_i$ on $U'_i$ we have \begin{equation}\label{eq:period}
  \log(e^{\psi^0_i} h'_i) + \beta_i \log|z_i| = f'_i+ \ol{f'_i}.
  \end{equation}
  We write
  $$
  \beta_i=\gamma_i + 2k_i
  $$
  for $0\leq \gamma_i <2$ and some integer $k_i$. We set
  \begin{equation}\label{eq:psi}
  \psi_i = \psi^0_i + \gamma_i \log |z_i|.
  \end{equation}
  These functions are clearly \psh\ on $U_i$, and $\gamma_i \log |z_i|$ contributes as an analytic singularity to $\psi^0_i$. Set
  $$
  \varphi'_i= z^{-k_i}_i e^{f'_i} \in \cO^*(U'_i).
  $$
  With these functions \eqref{eq:varphi} holds. \qed

  The idea of the proof is to introduce new local generators for the given bundle $L'$ over the spaces $U'_i= U_i \backslash A$. This amounts to change the local extensions to the sets $U_i$ of the line bundle $L'$.

  The new generators are
  $$
  \wt e'_i =e'_i {\varphi'_i}^{-1}.
  $$
  In these bundle coordinates the hermitian metric $h'$ on $L'$ is given by
  $$
  \wt h'_i = \|\wt e'_i\|^2= \|e'_i\|^2 |\varphi'_i|^{-2}  = h'_i \cdot |\varphi'_i|^{-2}
  $$
  so that \eqref{eq:varphi} reads
  \begin{equation}\label{eq:wth}
  \wt h'_i = |z_i|^{-\gamma_i} e^{-\psi^0_i}|U'_i= e^{-\psi_i}|U'_i .
  \end{equation}
  The transformed transition functions are
  $$
  \wt g'_{ij}= {\varphi'_i}\cdot g'_{ij}\cdot ({\varphi'_j})^{-1}
  $$
  yielding
  \begin{equation}\label{eq:wtg}
  |\wt g'_{ij}|^2 = \wt h'_j /\wt h'_i = |z_j|^{\gamma_i-\gamma_j} \left|\frac{z_i}{z_j}\right|^{\gamma_i} \cdot e^{\psi^0_i-\psi^0_j}.
  \end{equation}
  Since the function $z_i/z_j$ is holomorphic and nowhere vanishing  on $U_{ij}$, and since the function $\psi^0_i-\psi^0_j$ is pluriharmonic on $U_{ij}$, the function
  $$
  \left|\frac{z_i}{z_j}\right|^{\gamma_i} \cdot e^{\psi^0_i-\psi^0_j}
  $$
  is of class $\cinf$ on all of  $U_{ij}$ with no zeroes.

  Now $-2<\gamma_i-\gamma_j<2$, and $\wt g'_{ij}$ is holomorphic on $U'_{ij}$. So \eqref{eq:wtg} implies that $\wt g'_{ij} \in L^2_{\mathrm{loc}}( U_{ij})$. Hence $\wt g'_{ij}$ can be extended holomorphically to $U_{ij}$, which implies $\gamma_i \geq \gamma_j$, and by symmetry
  \begin{equation}\label{eq:beta}
  \gamma_i=\gamma_j
  \end{equation}
  (whenever $D\cap U_{ij} \neq \emptyset)$. Accordingly \eqref{eq:wtg} now reads
  \begin{equation}\label{eq:wtg1}
  |\wt g'_{ij}|^2 = \wt h'_j/ \wt h'_i = \left|\frac{z_i}{z_j}\right|^{\gamma_i} \cdot e^{\psi^0_i-\psi^0_j}, \end{equation}
  and the transition functions $\wt g'_{ij}$ can be extended holomorphically to all of $U_{ij}$ with no zeroes. So a line bundle $L$ exists.

  The functions $\psi_i$ are \psh, and the quantity
  $$
  \wh \omega = \ddb \psi_i = \omega + {\pi}\,\sum_\nu{\gamma_\nu}\,[D_\nu]
  $$
  is a well-defined positive current on $Y$ because of \eqref{eq:psi} and \eqref{eq:beta}. Its restriction to $Y'$ equals $\omega|Y'$, where the sum is taken over all connected components of $D$.

Now we can define
  $$
  \wt h_i= e^{-\psi_i} = e^{-\psi^0_i}|z_i|^{-\gamma_i}
  $$
  on all of $U_i$. It defines a positive, singular, hermitian metric on $L$. This shows the theorem in the special case.

  Observe that the numbers $\beta_j=\gamma_j+ 2k_j$ only depend on the connected component of the smooth hypersurface $D$.
\end{proof}

The above argument also contains the extension property for lower dimensional sets,  which is already known for metrics of class $\cinf$ (cf.\ Introduction).
\begin{proposition}\label{pr:smoothextcodim2}
  Let $Y$ be a complex manifold and let $A\subset Y$ be a closed analytic subset, whose codimension is at least $2$ everywhere. Let $(L',h')$ be a holomorphic line bundle on $Y'=Y\backslash A$ with a singular hermitian metric, whose curvature form extends as a closed positive current. Then $(L',h')$ can be extended to $Y$ as a holomorphic line bundle $(L,h)$ equipped with a singular hermitian metric in a unique way.
\end{proposition}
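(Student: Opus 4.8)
The plan is to run the mechanism of the proof of Proposition~\ref{pr:smoothext}, simplified by the absence of any divisorial part of $A$, using the local $\ddb$-potentials furnished by the extended curvature current to extend first the bundle and then the metric. Since $A$ has codimension at least two, holomorphic functions and nowhere vanishing holomorphic functions extend across $A$ by the second Riemann extension theorem, and, crucially, $U\backslash A$ is simply connected for every polydisk $U$, since deleting a set of real codimension at least four does not change the fundamental group. I cover $Y$ by polydisks $U_i$ and write $\omega|U_i = \ddb\,\psi^0_i$ for \psh\ functions $\psi^0_i$ --- a closed positive $(1,1)$-current on a polydisk admits a global \psh\ potential --- noting that on each overlap the difference $\psi^0_i-\psi^0_j$ is pluriharmonic.

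The hard part is to show that $L'$ is holomorphically trivial on each $U'_i = U_i\backslash A$. This is not automatic: for an analytic set of codimension exactly two the restriction map of Picard groups need not be surjective, the obstruction lying in a local cohomology group $H^2_A(U_i,\cO)$ that does not vanish in general, and it is precisely here that the curvature hypothesis is used. On $U'_i$ the curvature current of $(L',h')$ equals $\ddb\,\psi^0_i$; hence $L'|U'_i$ with the metric $h'$ and the trivial bundle with the weight $e^{-\psi^0_i}$ have the same curvature, so their difference carries a hermitian metric with vanishing curvature current. Its weight is therefore pluriharmonic, so this difference is a flat unitary line bundle, which is trivial because $U'_i$ is simply connected. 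I thus obtain a holomorphic frame $\wt e'_i$ of $L'|U'_i$ with $\|\wt e'_i\|^2_{h'} = e^{-\psi^0_i}$.

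In these frames the transition functions $\wt g'_{ij}$ of $L'$ over $U'_{ij} = U_{ij}\cap Y'$ satisfy $|\wt g'_{ij}|^2 = e^{\psi^0_i-\psi^0_j}$, which is of class $\cinf$ and nowhere zero on all of $U_{ij}$. Thus each $\wt g'_{ij}$ is holomorphic and locally bounded on $U'_{ij}$, hence extends holomorphically and without zeroes across $A$; the cocycle relation persists by the identity theorem, and the extended cocycle defines a holomorphic line bundle $L$ on $Y$ restricting to $L'$. Putting $\wt h_i = e^{-\psi^0_i}$ on $U_i$ gives a well-defined singular hermitian metric $h$ on $L$, since $\wt h_j/\wt h_i = |\wt g'_{ij}|^2$ and each $\psi^0_i$ is locally integrable; it extends $h'$ and its curvature current is $\omega\geq 0$. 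Contrary to the hypersurface case no current of integration is added, there being no divisor to absorb. Uniqueness follows from Lemma~\ref{le:iso}: two extensions agree on $Y'$, and the resulting isomorphism extends uniquely across the codimension-two set $A$.
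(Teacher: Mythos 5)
Your proposal is correct, and its skeleton coincides with the paper's: cover $Y$ by polydisks, take \psh\ potentials $\psi^0_i$ of the extended curvature current, produce frames $\wt e'_i$ of $L'|U'_i$ with $\|\wt e'_i\|^2_{h'}=e^{-\psi^0_i}$, extend the resulting bounded nowhere-vanishing transition functions across $A$ by Riemann's theorem, and define the metric by $e^{-\psi^0_i}$; uniqueness via Lemma~\ref{le:iso} is also how the paper concludes. The genuine difference lies in how the local triviality of $L'|U'_i$ (which you rightly flag as the non-automatic step, the obstruction sitting in $H^1(U'_i,\cO)\cong H^2_A(U_i,\cO)$) is obtained. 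The paper stratifies $A$, writes it locally as a complete intersection of smooth hypersurfaces, and runs the machinery of Proposition~\ref{pr:smoothext} and Corollary~\ref{co:snc} --- in particular Lemma~\ref{le:deltastar} --- observing that the monodromy coefficients $\beta_i$ vanish because of the codimension assumption. You instead twist $h'$ by $e^{\psi^0_i}$ to get a metric with vanishing curvature current, conclude by hypoellipticity that the local weights are pluriharmonic, hence that $L'|U'_i$ is flat unitary, and kill it using the simple connectivity of $U'_i$ (removing a set of real codimension $\ge 4$ from a polydisk preserves $\pi_1$). This is the same monodromy phenomenon seen from the representation-theoretic side, but your route produces the trivialization and the normalized frame in one stroke, avoiding the stratification, the complete-intersection reduction, and Lemma~\ref{le:deltastar}; what it costs is the explicit topological input on $\pi_1$ of the complement, which the paper's inductive hypersurface argument replaces by the vanishing of the period coefficients. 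One could also note that, as the paper remarks, the existence of the extended current $\omega$ need not be assumed here (psh potentials extend automatically across sets of codimension at least two), but since it is part of the hypothesis your use of it is legitimate.
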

Note that the extension property of the curvature current need not be assumed.
\begin{proof}
  By Lemma~\ref{le:iso} we may use a stratification of $A$ and assume that $A$ is smooth of codimension $k\geq 2$. The first step is to produce for any point $a\in  A$ a neighborhood $U(a)$, into which $(L',h')|(U(a)\backslash A)$ can be extended. We write $A\cap U(a)$ as a complete intersection of $n-k$ smooth hypersurfaces, and proceed like in the proof of Corollary~\ref{co:snc}. Because of the assumption on the codimension of $A$ we have trivial monodromy, and for an open covering of $U(a)\backslash A$ the numbers $\beta_i$ from the proof of Proposition~\ref{pr:smoothext} vanish.

  Altogether, we find a covering $\{U_i\}$ of $Y$ such that the line bundle $L'$ is holomorphically trivial on all $U'_i:=U_i\backslash A$. In terms of our standard notation, again the $h'_i=h|U'_i$ can be treated like  functions such that the $-\log h_i$ are plurisubharmonic functions, which can be extended as such to $U_i$ in a unique way. Again the monodromy argument from the proof of Proposition~\ref{pr:smoothext} is not needed -- only the holomorphic functions $f'_i$ can occur, and these are used to change the trivialization over the sets $U'_i$. Finally, the transition functions for $L'$ with respect to $\{U'_i\}$ extend holomorphically (in a unique way) to $\{U_i\}$ (with no zeroes) defining an extension $(L,h)$ of $(L',h')$.
\end{proof}
\begin{proposition}\label{pr:smoothextgen}
  Let $Y$ be a complex manifold and let $A\subset Y$ be a closed analytic subset. Let $(L',h')$ be a holomorphic line bundle on $Y'=Y\backslash A$ with a singular hermitian metric, whose curvature current is positive, and extends to $Y$ as a positive closed current $\omega$. Then $(L',h')$ can be extended to $Y$ as a holomorphic line bundle $(L,h)$ equipped with a singular hermitian metric, whose curvature current is positive. The curvature current of $h$ is equal to $\omega +  \pi\sum \gamma_j [A_j]$, where the $A_j$ are the divisorial components of $A$, and $0\leq \gamma_i <2$.
\end{proposition}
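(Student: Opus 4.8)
The plan is to reduce the general statement to the two situations already settled, namely the case of a smooth hypersurface (Proposition~\ref{pr:smoothext}) and the case of a subset of codimension at least two (Proposition~\ref{pr:smoothextcodim2}), by first peeling off the divisorial part of $A$ and afterwards the remaining thin set. Write $A=D\cup A''$, where $D=\bigcup_j A_j$ is the union of the reduced irreducible components $A_j$ of dimension $n-1$, and $A''$ collects those of dimension at most $n-2$. Set
$$
\Sigma = A'' \cup \mathrm{Sing}(D) \cup \bigcup_{j\ne k}(A_j\cap A_k),
$$
the set of points of $A$ near which $A$ fails to be a smooth hypersurface. Then $\Sigma$ is a closed analytic subset of $Y$ of codimension at least two everywhere, $Y^*:=Y\setminus\Sigma$ is a complex manifold, and $A\cap Y^* = D\setminus\Sigma$ is a smooth closed hypersurface in $Y^*$. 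Its connected components are the sets $A_j\cap Y^*$: these are connected because the regular locus of an irreducible analytic set is connected, and removing the further lower-dimensional analytic subset $\mathrm{Sing}(A_j)\cup\bigcup_{k\ne j}(A_j\cap A_k)$ from a connected manifold leaves it connected. Note also that $Y'\cap Y^* = Y'$, since $\Sigma\subset A$.

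First I would apply Proposition~\ref{pr:smoothext} on the manifold $Y^*$ to the smooth hypersurface $D\setminus\Sigma$ and to $(L',h')$, whose curvature current is the restriction of $\omega$ and hence extends across $D\setminus\Sigma$. This yields an extension $(L^*,h^*)$ over $Y^*$ whose curvature current equals $\omega|Y^* + \pi\sum_j\gamma_j[A_j\cap Y^*]$ with $0\le\gamma_j<2$. By the final remark of that proposition each $\gamma_j$ is attached to a connected component of the smooth hypersurface, hence by the previous paragraph it is a single well-defined number per divisorial component $A_j$. Concretely, over a sufficiently fine covering $\{U_i\}$ of $Y$, with local \psh\ potentials $\psi^0_i$ of $\omega$ and local reduced defining functions $f_j$ of the $A_j$ meeting $U_i$, the metric $h^*$ is given on $U_i\setminus\Sigma$ by $e^{-\psi_i}$ with $\psi_i=\psi^0_i+\sum_j\gamma_j\log|f_j|$.

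Next I would extend $(L^*,h^*)$ across the codimension $\ge 2$ set $\Sigma$. Since $Y^*=Y\setminus\Sigma$, Proposition~\ref{pr:smoothextcodim2} applies directly and produces a \emph{unique} extension $(L,h)$ of $(L^*,h^*)$, and therefore of $(L',h')$, to all of $Y$ (its hypothesis is in any case met, the curvature of $h^*$ extending as $\omega+\pi\sum_j\gamma_j[A_j]$). It then remains to identify the curvature current of $h$, and this is the step I expect to be the main obstacle, since a priori the extension across $\Sigma$ could carry additional singular mass supported on $\Sigma$. The way around this is to exhibit the extended potential explicitly: the functions $\psi^0_i$ are \psh\ on all of $U_i$, because $\omega$ is a positive closed current on the whole of $Y$, and each $\log|f_j|$ is \psh\ on $U_i$, so $\psi_i=\psi^0_i+\sum_j\gamma_j\log|f_j|$ is already \psh\ on the full $U_i$, with $\ddb\psi_i = \omega + \pi\sum_j\gamma_j[A_j]$ by the Lelong–Poincaré formula, the $[A_j]$ being the currents of integration over the full reduced hypersurfaces.

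Finally, the \psh\ extension furnished by Proposition~\ref{pr:smoothextcodim2} is some \psh\ function on $U_i$ that agrees on $U_i\setminus\Sigma$ with $-\log h^*_i=\psi_i$; since two \psh\ functions coinciding off the measure-zero set $\Sigma$ must coincide everywhere, that extension is exactly $\psi_i$. Hence the local potential of $h$ is $\psi_i$, and the curvature current of $h$ equals $\omega + \pi\sum_j\gamma_j[A_j]$ on all of $Y$, which is positive because $\omega\ge 0$, $\gamma_j\ge 0$ and $[A_j]\ge 0$. The residual bookkeeping — that $\Sigma$ is analytic of codimension at least two, that the currents $[A_j]$ extend across $\Sigma$ as closed positive currents, and that the $\gamma_j$ glue to component-wise constants — is routine given the irreducibility of the $A_j$, so the genuine content is concentrated in the removable-singularity argument for the \psh\ potentials just described.
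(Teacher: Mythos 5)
Your proof follows essentially the same route as the paper: split off the divisorial part of $A$, remove the codimension-$\ge 2$ locus (higher-codimensional components together with the singular/crossing points of the divisorial part), extend across the resulting smooth hypersurface via Proposition~\ref{pr:smoothext}, and then extend across the remaining thin set via Proposition~\ref{pr:smoothextcodim2}. Your additional verification that the curvature current picks up no extra mass on $\Sigma$ — via uniqueness of \psh\ extensions of the local potentials — is a correct filling-in of a step the paper leaves implicit.
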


\begin{proof}
  Let $A_1\subset A$ be the divisorial part of $A$, and $A_2$ the union of components in higher codimension. Denote by $A_0$ the singular set of $A_1$. We remove the set $A_0$ and $A_2$ from $Y$ and set $Y''= Y\backslash (A_0 \cup A_2)$. We apply Proposition~\ref{pr:smoothext} and extend $(L',h')$ to $Y''$. Observe that $\omega|Y''$ is altered like in the proof of Corollary~\ref{co:snc}.  We finally apply Proposition~\ref{pr:smoothextcodim2}.
\end{proof}

\section{Main theorem for normal spaces}
\par\noindent\textit{Proof of Theorem~\ref{th:main} for normal spaces.}
  Let $\mu:X \to Y$ be a desingularization of $Y$ such that the pull-back of the curvature tensor of $(L',h')$ extends to a positive closed current $\omega$ on $X$.
  Denote by $Y''\subset Y$ be the regular part. The given current $\omega$ gives rise to a current $\omega''$ on $Y''$. We apply Proposition~\ref{pr:smoothextgen} and extend the line bundle $(L',h')$ to $Y''$ with a resulting bundle $(L'',h'')$. The curvature current of $h''$ equals $\omega''+ \pi\sum \gamma_j [A_j\cap Y'']$, where the $A_j$ are the codimension $1$ components of $A$.  Let $\wt A_j\subset X $ be the proper transforms of the $A_j$ on the manifold $X$. We replace $\omega$ with the positive current $\omega+ \pi \sum \gamma_j [\wt A_j]$, and we are dealing with the line bundle $(L'',h'')$ on $Y''\subset Y$, whose curvature current extends to a desingularization as a positive current.

  Altogether we can assume without loss of generality that the set $A$ is contained in the singular set of $Y$. We proceed with the proof under this additional assumption and use the original notation.

  By Proposition~\ref{pr:smoothextgen} there exists a holomorphic line bundle $  L$ on $X$, equipped with a singular hermitian metric $  h$ whose restriction to $\mu^{-1}(Y')$ is equal to the pull-back of $(L',h')$. We consider the sheaf $\cS=\mu_*(  L)$. Since $Y$ is normal, the fibers of $\mu$ are connected so that $\mu_*(L)|Y'=L'$.

  We apply Hironaka's flattening theorem to the coherent sheaf $\cS$ of rank $1$: There exists a modification $\tau:W\to Y$ given by a  sequence of blow-ups with smooth centers located over $A$ with the property that $\wh L= \tau^* \cS/ \textit{torsion}$ is an invertible sheaf.

\par\noindent
  {\bf Claim 1.} \/ \textit{Without loss of generality we can assume the existence of a holomorphic map $\kappa:X \to W$ such that $\tau\circ \kappa = \mu$.}
  $$
  \xymatrix{& X \ar@{-->}[dl]_\kappa \ar[d]^\mu \\ W \ar[r]^\tau &Y}
  $$
  \begin{proof}[Proof of the Claim]
    The pair $(\mu,\tau)$ defines a bimeromorphic map from $X$ to $W$. Let $\wt W$ be a desingularization of its graph $X\times_Y W$:
    $$
    \xymatrix{\wt W \ar[r]^{\wt \tau}\ar[d]_{\wt\mu}&  X \ar[d]^\mu\\  W \ar[r]^\tau & Y}
    $$
  Now, since the fibers of $\wt\tau$ are connected, $\wt \tau_*\wt\tau^*   L =   L$, and also the curvature current of $\wt \tau^* h$ pushed forward to $X$ equals the original curvature current of $  h$. Hence we may replace $X$ by $\wt W$, the line bundle $  L$ by $\wt \tau^*   L$,  the map $\mu$ by $\mu \circ \wt\tau$, and we set $\kappa= \wt \mu$.
  \end{proof}
  We return to the original situation with the additional map $\kappa$.

  {\bf Claim 2.} \/ \textit{There is an (injective) morphism of invertible $\cO_X$-modules $\kappa^*\wh L \to   L$.}
  \begin{proof}[Proof of Claim 2]
    The canonical morphism $\kappa^*\tau^* \mu_*   L = \mu^*\mu_*   L \to   L$ factors over $\kappa^*(\tau^* \mu_*   L /\textit{torsion}) = \kappa^* \wh L$.
  \end{proof}
  Now we turn to the sheaf $\wh L$. The image of the invertible sheaf $\kappa^* \wh L$ in $  L$ can be identified with $  L(-D)$, where $D$ denotes an effective divisor on $X$, and the injection is the multiplication $\cdot \sigma$ by a canonical section  $\sigma$ of $\cO_X(D)$. It follows from the construction that $D\subset \mu^{-1}(A)$.

  Let $D=0$. We know that $\wh L$ agrees with $L$ over the complement $Y'$ of $A$ in $Y$. Because of Claim~2 the line bundle $L$ is trivial on the fibers of $\kappa$, and since the curvature of $h$ is semi-positive, the singular hermitian metric $h$ descends to $W$ as a (singular) metric $\wh h$ on $\wh L$. The hermitian line bundle $(\wh L, \wh h)$ on $W$ satisfies the claim of Theorem~\ref{th:main}.

  If $D\neq 0$, the hermitian metric on $L$ does not yield a (singular) hermitian metric on $\kappa^*\wh L$ so that zeroes must be taken into account.

  We consider $\tau^*\mu_*(  L(D))$. The multiplication with $\sigma$ induces a morphism $\tau^* \mu_*   L \to \tau^* \mu_* (  L(D))$. Modulo torsion we get
  \begin{equation}\label{eq:Lhatsigma}
  \xymatrix{{\wh L} \ar[r]^-{\cdot \sigma}&\tau^* \mu_* (  L(D))/\textit{torsion}  },
  $$
  and apply $\kappa^*$  (observe that torsion elements are mapped to torsion elements):
  $$
    L(-D)=\xymatrix{{\kappa^*}\wh L \ar[r]^-{\cdot \sigma}\ar[d]_{\cdot \sigma}& (\mu^* \mu_* (  L(D)))/\textit{torsion}\ar[d]^\nu\\   L\ar[r]^{\cdot \sigma}&   L(D)}
  \end{equation}
  where $\nu$ is the natural morphism, which exists, because $L(D)$ is torsion-free. We can see from the diagram that the morphism $\nu$ is induced by the multiplication by $\sigma$.

  We apply the Hironaka flattening process to $\tau^*\mu_*(  L(D))$: There exists a modification $\lambda:Z\to W$ whose exceptional set is mapped to $A$ under $\tau\circ\lambda$ such that
  $$
  \wh{\wh L}=\lambda^*\tau^*\mu_*(  L(D))/\textit{torsion}
  $$
  is an invertible $\cO_Z$-module.

  {\bf Claim 3.} \/ \textit{The invertible sheaf $\wh{\wh L}$ on $Z$, where $\tau\circ\lambda: Z \to Y$ is a modification with center contained in $A$ possesses a singular hermitian metric $\wh{\wh h}$ that satisfies the statement of Theorem ~\ref{th:main}.}
  \begin{proof}[Proof of Claim 3]
  From the definitions of $\wh L$ and $\wh{\wh L}$ we get an injection of invertible sheaves
  $$
  \xymatrix{\lambda^* \wh L \ar[r]^-{\cdot \sigma} &\wh{\wh L}}
  $$
  that is induced by the multiplication with $\sigma$.

  We apply the construction of {\it Claim 1} by letting $V$ be a desingularization of $Z\times_Y X$, and get
  $$
  \xymatrix{V \ar[rr]^\alpha \ar[d]_\rho &   & X \ar[dl]_\kappa \ar[d]^\mu \\ Z \ar[r]^\lambda & W \ar[r]^\tau   & Y}.
  $$
  From \eqref{eq:Lhatsigma} we have a commutative diagram of invertible sheaves on $Z$
  \begin{equation}\label{eq:sheafdia}
  \xymatrix{\rho^* \lambda^* \wh L= \alpha^* \kappa^*( \tau^* \mu_* L/ torsion)
   \ar[r]^-{\cdot \sigma}\ar[d]_{\cdot \sigma} & \rho^*( \lambda^* \tau^* \mu_*(L(D))/torsion = \rho^*\big(\wh{\wh L}\big)\ar[d]^{\cdot \sigma} \\ \alpha^* L \ar[r]^{\cdot \sigma}  & \alpha^* (  L(D)) },
  \end{equation}
  which implies that we can identify the invertible sheaves $\alpha^*   L$ and $\rho^*\big(\wh{\wh L}\big)$ as subsheaves of $\alpha^*(  L(D))$ on $V$.

   We equip $\alpha^* L$ with the pull-back of $h$. Like in the case $D=0$ we see from $\alpha^*L=\rho^*\wh{\wh L}$ that $\alpha^*h$ descends to $\wh{\wh L}$ (since $h$ has semi-positive curvature and since $\rho^*\wh{\wh L}$ is trivial on the fibers of $\rho$). Over $Z\backslash \lambda^{-1}\tau^{-1}(A)$ the hermitian line bundle $(\wh{\wh L}, \wh{\wh h})$ can be identified with $(L',h')$.
   \end{proof}

We mention that by Proposition~\ref{pr:smoothextgen} and Theorem~\ref{th:main} we can trace the  growth of the singular hermitian metric: The given extension of the curvature current of $h'$ is only being changed by adding non-negative multiples of currents of integration located at the boundary with coefficients smaller than $2\pi$, whereas the pullback of the given singular metric $h'$ is just being extended.

\section{Reduced spaces}
Although our extension theorem is based upon analytic methods that are in principle restricted to normal complex spaces, the theorem holds for arbitrary reduced complex spaces.
\begin{definition}
  Let $X$ be a reduced complex space and $L$ a holomorphic line bundle. Then a singular hermitian metric $h$ on $L$ is a singular hermitian metric on the pull-back of $L$ to the normalization of $X$, and its curvature is a current on the normalization of $X$.
\end{definition}

\begin{theorem}\label{th:nonnormal}
Let $Y$ be a reduced complex space, and $A\subset Y$ a closed analytic
subset. Let $L$ be an invertible sheaf on $Y \backslash A$, which
possesses a holomorphic extension to the normalization of\/ $Y$ as an
invertible sheaf. Then there exists a reduced complex space $Z$ together
with a finite map $Z \to Y$, which is an isomorphism over $Y\backslash A$
such that $L$ possesses an extension as an invertible sheaf to $Z$.
\end{theorem}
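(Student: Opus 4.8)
The plan is to read the theorem as a descent statement along the normalization and to realise $Z$ as an intermediate finite space. Let $\nu:\wt Y\to Y$ be the normalization, a finite map that is biholomorphic over the normal locus, and let $\wt L$ be the given invertible extension of $\nu^*L$ to $\wt Y$. Set $\mathcal{A}=\nu_*\cO_{\wt Y}$, a coherent sheaf of $\cO_Y$-algebras with $\cO_Y\subseteq\mathcal{A}$. Finite analytic spaces $Z$ with factorisations $\wt Y\to Z\to Y$ of $\nu$ correspond to coherent $\cO_Y$-subalgebras $\mathcal{B}$ with $\cO_Y\subseteq\mathcal{B}\subseteq\mathcal{A}$, via $Z=\mathrm{Specan}_Y(\mathcal{B})$; such a $Z$ is automatically reduced, since $\mathcal{A}$, being the direct image of the structure sheaf of the reduced space $\wt Y$, carries no nilpotents, and neither then does the subring $\mathcal{B}$. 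The task is therefore to produce such a $\mathcal{B}$ with the two properties that $\wt L$ descends to an invertible sheaf on $Z=\mathrm{Specan}_Y(\mathcal{B})$ and that $\mathcal{B}=\cO_Y$ over $Y\backslash A$.

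I would construct $\mathcal{B}$ from the transition functions of $\wt L$. Choosing a cover $\{W_\alpha\}$ of $Y$ by sufficiently small open sets, the bundle $\wt L$ is trivial on each $\nu^{-1}(W_\alpha)$ (on a small neighbourhood of the finite fibre $\nu^{-1}(y)$ a line bundle is topologically, hence by the Oka principle holomorphically, trivial). The resulting transition units $g_{\alpha\beta}$ are nowhere vanishing holomorphic functions on $\nu^{-1}(W_\alpha\cap W_\beta)$, i.e. sections of $\mathcal{A}^*$ over $W_\alpha\cap W_\beta$, satisfying the cocycle relation in $\mathcal{A}^*$. Let $\mathcal{B}\subseteq\mathcal{A}$ be the $\cO_Y$-subalgebra generated by $\cO_Y$ together with all $g_{\alpha\beta}$ and their inverses $g_{\alpha\beta}^{-1}$. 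By construction the cocycle $\{g_{\alpha\beta}\}$ then takes values in $\mathcal{B}^*$ and defines an invertible sheaf $\mathcal{M}$ on $Z$; pulling the cocycle back along $\wt Y\to Z$ returns $\wt L$ verbatim, so that $\wt L$ is the pull-back of $\mathcal{M}$, as required.

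To obtain $\mathcal{B}=\cO_Y$ over $Y\backslash A$ I would exploit that over $Y\backslash A$ the bundle $\wt L$ equals $\nu^*L$ and hence already descends. Refining the cover and using the canonical descent datum of $\nu^*L$, I would choose the trivialisations of $\wt L$ on the sets $\nu^{-1}(W_\alpha)$ so that over $\nu^{-1}(W_\alpha\backslash A)$ they are pull-backs of trivialisations of $L$; then the restrictions $g_{\alpha\beta}|_{(W_\alpha\cap W_\beta)\backslash A}$ lie in $\cO_Y^*$, the generators contribute nothing beyond $\cO_Y$ over $Y\backslash A$, and $Z\to Y$ is biholomorphic there. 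Coherence and finiteness of $\mathcal{B}$ are then local questions: over a point $y$ the algebra $\mathcal{B}_y$ is generated by finitely many elements of the finite $\cO_{Y,y}$-module $\mathcal{A}_y$, hence is a submodule of a Noetherian module and therefore itself a finite $\cO_{Y,y}$-module.

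The main obstacle is precisely the coordination of these last two requirements—the coherence of the generated subalgebra $\mathcal{B}$ and the normalisation of the trivialisations that forces $\mathcal{B}=\cO_Y$ over $Y\backslash A$. Concretely one must arrange the local trivialisations of $\wt L$ to descend over $Y\backslash A$ uniformly enough that the subalgebra generated by the $g_{\alpha\beta}$ differs from $\cO_Y$ only over $A$—the failure of descent being concentrated on the intersection of $A$ with the non-normal locus—while still trivialising $\wt L$ across the fibres lying over $A$. Once this compatibility of the local trivialisations is secured, the cocycle and reducedness verifications recorded above are routine.
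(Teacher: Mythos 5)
Your overall strategy is the same as the paper's: realize $Z$ as the analytic spectrum $\mathrm{Specan}_Y(\mathcal{B})$ of a coherent $\cO_Y$-subalgebra $\cO_Y\subseteq\mathcal{B}\subseteq\nu_*\cO_{\wh Y}$ (where $\nu:\wh Y\to Y$ is the normalization) that reduces to $\cO_Y$ over $Y\backslash A$. The genuine gap is the step you yourself single out and then defer: you need trivializations of the extended bundle on the sets $\nu^{-1}(W_\alpha)$ whose restrictions to $\nu^{-1}(W_\alpha\backslash A)$ are pull-backs of trivializations of $L$. This asks that the unit $u_\alpha\in\cO^*(\nu^{-1}(W_\alpha\backslash A))$ comparing an arbitrary trivialization of the extension with the pulled-back trivialization of $L$ extend to a unit across $\nu^{-1}(A)$. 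When $\nu^{-1}(A)$ has divisorial components this fails in general: $u_\alpha$ may acquire a zero, a pole, or an essential singularity along $\nu^{-1}(A)$, and a nowhere vanishing section over the complement of a hypersurface need not extend without zeros. Without this normalization the cocycle $g_{\alpha\beta}$ need not restrict to $\cO_Y^*$ off $A$, so neither $\mathcal{B}|_{Y\backslash A}=\cO_Y|_{Y\backslash A}$ nor $\mathcal{M}|_{Y\backslash A}\cong L$ is secured; the proof is therefore incomplete exactly where its substance lies.

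For comparison, the paper does not generate $\mathcal{B}$ from transition functions at all: it takes the gap sheaf $\cO_Y[A]_{\nu_*\cO_{\wh Y}}$, the \emph{maximal} subalgebra of $\nu_*\cO_{\wh Y}$ consisting of sections whose restriction off $A$ lies in $\cO_Y$. This choice is canonical (no cover or trivialization enters), coherent by Siu's theorem on gap sheaves, and of finite presentation as an $\cO_Y$-algebra by Houzel, so the ad hoc well-definedness and coherence arguments you sketch for the generated subalgebra become unnecessary, and the isomorphism over $Y\backslash A$ is immediate from the definition of the gap sheaf. On the other hand, the paper's proof stops after constructing $Z$ and leaves the descent of the invertible sheaf itself implicit; that descent is precisely your cocycle argument, with membership of $g_{\alpha\beta}^{\pm 1}$ in the units of the gap sheaf playing the role of your condition that the transition functions descend off $A$. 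So your proposal is a useful complement to the paper's argument, but it stands or falls with the trivialization-compatibility step, which is not established.
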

\begin{proof}
Denote by $\nu:\wh Y \to Y$ the normalization of $Y$. The presheaf
$$
U\mapsto \{\sigma \in  (\nu_* \cO_{\wh Y})(U); \sigma|U\backslash A\in \cO_Y(U\backslash A)\}
$$
defines a coherent $\cO_Y$-module, the so-called  {\em gap sheaf}
$$
\cO_Y[A]_{\nu_*\cO_{\wh Y}}
$$
on $Y$ (cf.\ \cite[Proposition 2]{siu:gap}). It carries the structure of
an $\cO_Y$-algebra. According to Houzel \cite[Prop.\ 5 and Prop.\ 2]{hou}
it follows that it is an $\cO_Y$-algebra of finite presentation, and hence
its analytic spectrum provides a complex space $Z$ over $Y$ (cf.\ also
Forster \cite[Satz 1]{fo}).
\end{proof}

\end{document}